\documentclass[12pt,a4paper,psamsfonts]{amsart}
\usepackage[english]{babel}
\usepackage[T1]{fontenc}
\usepackage{mathtools}
\usepackage{fancyhdr}
\usepackage{appendix}
\usepackage{amssymb,amscd,amsxtra,calc}
\usepackage{mathrsfs}
\usepackage{cmmib57}
\usepackage{multirow}
\usepackage[all]{xy}
\usepackage{longtable}
\usepackage[colorlinks=true,linkcolor=blue,anchorcolor=blue,citecolor=blue,pagebackref,linktocpage]{hyperref}
\usepackage{cleveref}
\usepackage{tikz}
\usepackage{cite}
\theoremstyle{plain}
    \newtheorem{thm}{Theorem}[section]

     \newtheorem{conjecture}[thm]{Conjecture}

    \newtheorem{lemma}[thm]{Lemma}
    \newtheorem{proposition}[thm]{Proposition}
    
    \newtheorem{theorem}[thm]{Theorem}
    \newtheorem{problem}[thm]{Problem}

\theoremstyle{definition}
    \newtheorem{definition}[thm]{Definition}

    \newtheorem*{notation*}{Notation and Terminology}
      
    \newtheorem{remark}[thm]{Remark}
    
\theoremstyle{remark}

\newcommand{\arxiv}[1]{\href{https://arxiv.org/abs/#1}{{\tt arXiv:#1}}}

\newcommand{\bP}{\mathbb{P}}
\newcommand{\bQ}{\mathbb{Q}}
\newcommand{\bR}{\mathbb{R}}
\newcommand{\bC}{\mathbb{C}}

\newcommand{\bZ}{\mathbb{Z}}

\newcommand{\NE}{\overline{\operatorname{NE}}}
\newcommand{\Nef}{\operatorname{Nef}}

\newcommand{\NS}{\operatorname{NS}}

\newcommand{\Pic}{\operatorname{Pic}}

\newcommand{\mstriangle}[1]{
\begin{tikzpicture}[x=0.3cm,y=0.3cm]
\draw (-0.4,-0.433) -- (1.4,-0.433);
\draw (-0.2,-0.7794) -- (0.7,0.7794);
\draw (1.2,-0.7794) -- (0.3,0.7794);
\end{tikzpicture}
}
\newcommand{\mssharp}[1]{
\begin{tikzpicture}[x=0.3cm,y=0.3cm]
\draw (-0.8,-0.5) -- (0.8,-0.5);
\draw (-0.8,0.5) -- (0.8,0.5);
\draw (-0.5,-0.8) -- (-0.5,0.8);
\draw (0.5,-0.8) -- (0.5,0.8);
\end{tikzpicture}
}

\makeatletter

\newcommand{\Rmnum}[1]{\expandafter\@slowromancap\romannumeral #1@}
\makeatother

\title[Fake quadrics with bounded cohomology property]
{Fake quadrics with bounded cohomology property}

\author{Sichen Li}
\address{
School of Mathematics, East China University of Science and Technology, Shanghai 200237, P. R. China}
\email{\href{mailto:sichenli@ecust.edu.cn}{sichenli@ecust.edu.cn}}

\subjclass[2020]{14C20, 14E30, 14J29}
\keywords{Bounded negativity conjecture, fake quadrics, bounded cohomology property, Mori dream surfaces, Picard number two}

\begin{document}

\begin{abstract}
In this paper, we characterize fake quadrics $X$ satisfying the bounded cohomology property.
Namely, there exists a constant $c_X>0$ such that $h^1(\mathcal O_X(C))\le c_X h^0(\mathcal O_X(C))$ for every curve $C$ on  $X$.
\end{abstract}

\maketitle
\section{Introduction}
A very open problem in the theory of algebraic surfaces is the following bounded negativity conjecture (BNC for short).
\begin{conjecture}
\cite[Conjecture 1.1]{Bauer et al. 2013}
Let $X$ be a smooth projective surface over $\bC$.
Then there exists an integer $b=b(X)\ge0$ such that $C^2\ge-b$ for every curve $C\subseteq X$.
\end{conjecture}
Bauer et al. \cite[Conjecture 2.5.3]{Bauer et al. 2012} introduced the following terminology.
\begin{definition}
\label{BCP-defn}
A smooth projective surface $X$ is said to satisfy the bounded cohomology property (BCP for short), if there exists a constant $c_X>0$ such that $h^1(\mathcal O_X(C))\le c_Xh^0 (\mathcal O_X(C))$ for every curve $C$ on $X$.
\end{definition}
It turns out by Ciliberto et al. \cite{Ciliberto et al. 2017} that the BCP implies the BNC.
Note that a smooth projective surface satisfies the BCP provided that  $-K_X$ is pseudoeffective (cf.\cite[Proposition 1.4]{HL26}).
In general, it is  an open problem to classify smooth projective surfaces with the BCP as posed by Ciliberto et al.\cite[Question 6]{Ciliberto et al. 2017}.
\begin{problem}
Classify all smooth projective surfaces with the BCP.
\end{problem}
\begin{remark}
For recent progress of the BCP, we refer to \cite{Ciliberto et al. 2017,  HL26, LL26, Li21, Li23, Li26a, Li26b}.
For some examples of surfaces with the BCP, we refer to \cite[Section 5]{HL26}.
\end{remark}
In \cite{HK00}, Hu and Keel proved that if the Cox ring of an algebraic variety $X$ is finitely generated, then $X$ enjoys particularly nice properties with respect to the minimal model program.
Such varieties are called Mori dream spaces.
Note that  a smooth projective surface $X$ with $q(X)=0$ is a Mori dream surface if and only if its effective cone is rational polyhedral and every nef divisor is semiample; see \cite[Corollary~2.6]{AHL10}, also see  \cite[Chapter 5]{ADHL15}.
The classification of Mori dream surfaces is a widely open problem.
Over the past decade or so, substantial progress has been made toward the classification of Mori dream surfaces \cite{AHL10, AL11, KL19,KL26, LLL26, LL26, TVV11}.

As a first step toward the classification of Mori dream surfaces $X$ with Picard number $\rho(X)=2$, it is straightforward to derive the following result.
\begin{proposition}
\label{MDS-rho2-prop}
Let $X$ be a smooth projective surface  with $q(X)=0$ such that $\NE(X)=\bR_{\ge0}[C_1]+\bR_{\ge0}[C_2]$, and $C_1$ and $C_2$ are curves on $X$.
Then $X$ is a Mori dream surface if and only if $X$ falls into one of the following cases.
\begin{enumerate}
\item $\kappa(X,C_1)=\kappa(X,C_2)=1$.
\item $\kappa(X,C_1)=1, C_2^2<0$ and $(-C_2^2)C_1+(C_1\cdot C_2)C_2$ is semiample.
\item $C_1^2<0$ and $C_2^2<0$, and $(C_1\cdot C_2)C_1+(-C_1^2)C_2$ and $(-C_2^2)C_1+(C_1\cdot C_2)C_2$ are semiample.
\end{enumerate}
\end{proposition}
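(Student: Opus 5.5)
The plan is to use the criterion quoted above from \cite[Corollary~2.6]{AHL10}: since $q(X)=0$, $X$ is a Mori dream surface if and only if $\operatorname{Eff}(X)$ is rational polyhedral and every nef divisor is semiample. The hypothesis $\NE(X)=\bR_{\ge0}[C_1]+\bR_{\ge0}[C_2]$ forces $\rho(X)=2$. Because $[C_1],[C_2]$ are classes of curves, the closed cone is already spanned by effective classes. So $\operatorname{Eff}(X)$ is rational polyhedral, and it equals $\NE(X)$ since both extremal rays are effective. The whole question therefore reduces to semiampleness of the nef cone.

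The first step is to compute $\Nef(X)$ as the dual of $\NE(X)$. An extremal ray $\bR_{\ge0}[C_i]$ of the effective cone of a surface satisfies $C_i^2\le 0$: if $C_i^2>0$, then $C_i$ would be big and lie in the interior. By the Hodge index theorem both rays cannot have square $0$ unless $X$ is degenerate, and with $\rho=2$ we have $C_1\cdot C_2>0$. For each $i$ there are two cases.

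\emph{Case $C_i^2=0$.} Then $C_i$ is nef and spans a boundary ray of $\Nef(X)$, which is common to both cones.

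\emph{Case $C_i^2<0$.} The corresponding boundary ray of $\Nef(X)$ is the class orthogonal to $C_i$ lying in the cone. Concretely, $D_2=(-C_2^2)C_1+(C_1\cdot C_2)C_2$ satisfies $D_2\cdot C_2=0$, and symmetrically $D_1=(C_1\cdot C_2)C_1+(-C_1^2)C_2$ satisfies $D_1\cdot C_1=0$.

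Every nef divisor is a nonnegative combination of the two boundary rays of $\Nef(X)$. Semiampleness is preserved under sums and positive multiples, so every nef divisor is semiample if and only if (suitable integral multiples of) both boundary generators are semiample.

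The remaining task is to translate semiampleness of a ray with $C_i^2=0$ into $\kappa(X,C_i)=1$. If $C_i^2=0$ and $C_i$ is semiample, then the morphism defined by $|mC_i|$ contracts $C_i$ and is not birational. Hence its image is a curve, and $\kappa(X,C_i)=1$. Conversely, suppose $\kappa(X,C_i)=1$ with $C_i$ nef and $C_i^2=0$. Take $|mC_i|=|M|+F$ with $M$ movable and $F$ fixed, so that $h^0(mC_i)\ge2$. Since $C_i$ spans an extremal ray, both $M$ and $F$ are proportional to $C_i$ in $N^1$. Then $M^2=0$ and $M$ is movable, so $|M|$ is base point free: a pencil with no base points, because $M^2=0$. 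It follows that $M\equiv aC_i$ with $a>0$, and since $q=0$ numerical equivalence agrees with linear equivalence up to torsion. Passing to a multiple to kill torsion, $C_i$ is semiample.

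This gives exactly the three cases of the statement. Since the roles of $C_1,C_2$ are symmetric, the case $\kappa(X,C_1)=1$ with $C_2^2<0$ covers the mixed case. The case where both $C_i^2=0$ and both $C_i$ are nef is case (1). The case where both have negative square is case (3).

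The main obstacle I expect is the last step: rigorously handling the torsion and fixed-part decomposition when $\kappa(X,C_i)=1$. This uses extremality of the ray, to force $F$ and $M$ to lie on it. It also uses $q(X)=0$, so that $\Pic$ modulo torsion equals $\NS$.
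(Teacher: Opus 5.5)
Your proposal is correct and follows essentially the paper's route: you reduce, using $q(X)=0$, to semiampleness of the two boundary rays of $\Nef(X)$, the dual of $\NE(X)$. You identify those rays as $C_i$ when $C_i^2=0$, or as $(C_1\cdot C_2)C_1+(-C_1^2)C_2$ and $(-C_2^2)C_1+(C_1\cdot C_2)C_2$ in the negative cases. You then use that a nef isotropic divisor is semiample if and only if it has Iitaka dimension $1$. The only difference is that you prove the reduction to extremal rays, the bound $C_i^2\le 0$, and the isotropic semiampleness criterion yourself, whereas the paper cites them. One sentence should be corrected: the Hodge index theorem does \emph{not} prevent both rays from having square $0$ (e.g.\ $\bP^1\times\bP^1$, or the fake quadrics treated later in this paper). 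What it gives is that two independent isotropic classes cannot be orthogonal when $\rho=2$, hence $C_1\cdot C_2>0$. Since your case analysis already handles $C_1^2=C_2^2=0$ correctly as case (1), nothing else is affected.
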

\begin{remark}
Determining whether a nef divisor $D$ on a smooth projective surface $X$ is semiample typically requires explicit knowledge of its birational geometry; see, e.g., \cite[Proposition~2.6]{LLL26} and \cite[Theorem~2.9]{LL26} for semiampleness criteria.
To the best of our knowledge, we confirm the existence of each case in Proposition \ref{MDS-rho2-prop}.
	The existence of Case (1) follows from the fact that a surface $X$ is a Mori dream surface provided that it is isogenous to a higher product with $p_g=0$ by  \cite[Proposition 3.4]{KL19} and \cite[Theorem 3.5]{FL22}.
	For case (2), given any $\chi\ge3$, there exists a Jacobian elliptic Mori dream  surface $X$ with $\chi(\mathcal O_X)=\chi, \rho(X)=2$ and a $(-\chi)$-curve by \cite[Proposition 2.3 and Theorem 6.2]{LLL26}.
The classification of smooth projective surfaces $X$ with $\rho(X)=2$ and two negative curves was given in  \cite[Claim 2.11]{Li19}.
Note that a K3 surface is a Mori dream surface if and only if $\NE(X)$ is rational polyhedral by \cite[Theorem 5.1.5.1]{ADHL15}.
Thus, for Case (3), there exist infinitely many Mori dream K3 surfaces  $X$ with $\rho(X)=2$ and two negative curves, see e.g. \cite[Claim 2.12]{Li19}.
\end{remark}
Minimal surfaces $X$ of general type with $p_g(X)=0$ and $K_X^2=8$ are called fake quadrics.
In \cite[Question 3.2]{KL19}, Keum and Lee asked whether a fake quadric is a Mori dream surface.
On the one hand, Cascini et al. \cite{CCCK25} showed that Shavel-type surfaces are even-type fake quadrics, which are not Mori dream surfaces, as their closed Mori cone fails to be rationally polyhedral.
On the other hand, every fake quadric is a Mori dream surface provided that it is isogenous to a higher product by Keum and Lee \cite{KL19} and Frapporti and Lee \cite{FL22}.
Recently, Hua and Li  proved in \cite[Theorem 1.9]{HL26} that every Mori dream surface satisfies the BCP.
These results motivate us to address the question of whether every fake quadric satisfies the BCP even when it is not a Mori dream surface, including cases where its Mori cone fails to be rational polyhedral. 

The BCP for fake quadrics was investigated by Yang~\cite{Yang23}.
In particular, he established the BCP for even fake quadrics and for odd fake quadrics with $b(X)>0$.
For odd fake quadrics with \(b(X)=0\), his treatment relies on the additional assumption that \(X\) is a Mori dream surface.

In the present paper, we give a complete treatment of the BCP for fake quadrics via a different approach, building on methods developed in our previous work~\cite{Li21, Li23, HL26, Li26a, Li26b}. In particular, our approach does not require the Mori dream surface assumption in the odd case.
To formulate our main theorem, we introduce the following terminology.
\begin{definition}
Let \(X\) be a fake quadric such that
$
\NS(X)=\mathbb Z[Q_1]\oplus\mathbb Z[Q_2].
$
We say that a curve \(D\subset X\) is \emph{of \(n\)-type} if
$
D\sim (n+1)Q_1+nQ_2.
$
Set
$$
I_X=\bigl\{n\in\mathbb Z_{>0}\mid
\text{there exists a curve }D\subset X
\text{ of \(n\)-type}\bigr\}.
$$
For each \(n\in I_X\), we fix one \(n\)-type curve representing the class \((n+1)Q_1+nQ_2\).
We say that \(X\) satisfies \emph{uniform \(h^0\)-growth for curves of \(n\)-type} if either \(I_X\) is finite, or \(I_X\) is infinite and
$$
\liminf_{\substack{n\in I_X\\ n\to+\infty}}
\frac{h^0\bigl(\mathcal O_X((n+1)Q_1+nQ_2)\bigr)}{n}>0.
$$
\end{definition}
When $X$ is an odd fake quadric with $b(X)=0$ and $I_X$ is infinite, we obtain a necessary and sufficient condition for the BCP in terms of uniform $h^0$-growth for curves of $n$-type.
We now state our main result in this paper.
\begin{theorem}
\label{Mainthm}
Let \(X\) be a fake quadric.
Then the following statements hold.
\begin{enumerate}
	\item If $X$ is of even type, then $X$ satisfies the BCP.
	\item If $X$ is of odd type with $b(X)>0$, then $X$ satisfies the BCP.
 \item If $X$ is of odd type with $b(X)=0$ and $I_X$ is finite, then $X$ satisfies the BCP.
	\item If $X$ is of odd type with $b(X)=0$ and $I_X$ is infinite, then $X$ satisfies the BCP if and only if $X$ satisfies uniform $h^0$-growth for curves of $n$-type.
\end{enumerate}
\end{theorem}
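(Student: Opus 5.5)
The plan is to reduce the BCP to a numerical estimate via Riemann--Roch. For a fake quadric, $\chi(\mathcal O_X)=1$, $K_X^2=8$, $q=p_g=0$, and $\rho(X)=2$. For a curve $C$ we have $h^2(\mathcal O_X(C))=h^0(K_X-C)$, which vanishes as soon as $C\cdot H>K_X\cdot H$ for an ample $H$; the finitely many remaining numerical classes of curves contribute a bounded amount. Hence, up to finitely many classes,
\[
h^1(\mathcal O_X(C))=h^0(\mathcal O_X(C))-1-\tfrac12 C\cdot(C-K_X),
\]
and the BCP is equivalent to a bound $-\tfrac12 C\cdot(C-K_X)\le c\,h^0(\mathcal O_X(C))$. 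Curves with $C\cdot(C-K_X)\ge 0$ are harmless. So everything reduces to curves with $C^2<K_X\cdot C$, i.e.\ curves close to the boundary of the positive cone. There are two cases.
\begin{itemize}
\item \emph{Even type.} $\NS(X)$ is the hyperbolic plane and $K_X\equiv 2Q_1+2Q_2$.
\item \emph{Odd type.} $\NS(X)=\langle 1\rangle\oplus\langle -1\rangle$, written in a basis where $Q_1^2=1$, $Q_2^2=-1$, $Q_1Q_2=0$ and $K_X=3Q_1+Q_2$, up to the choice of basis.
\end{itemize}

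For part (1), write $C\equiv aQ_1+bQ_2$ with $a,b\ge 0$ (effective classes lie in the nef cone, since $\NE(X)$ is contained in the closure of the positive cone when there are no negative curves). Then $-C(C-K_X)=-2ab+2a+2b$. This is positive only when $a\le 1$ or $b\le 1$, i.e.\ for $C\equiv Q_1+bQ_2$ or $aQ_1+Q_2$, and $a,b=0$ cases. In these cases $-\chi$ grows linearly in $b$. I would show that $h^0$ also grows linearly there, using the fibration-like structure: either $|Q_i|$ is not a pencil, or one uses that $C\cdot Q_1\ge 1$ and argues that an irreducible curve with $C^2=2b$ gives, via restriction sequences $0\to\mathcal O(C-Q_1)\to \mathcal O(C)\to\dots$, or via the fact that irreducible $C$ with $C^2>0$ satisfies $h^0\ge$ something. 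Alternatively, I would show that such classes contain only finitely many irreducible curves by the Hodge index/genus bound $p_a(C)=1+\tfrac12 C(C+K)\ge0$, which holds automatically. The cleaner route is to use $h^1(\mathcal O_X(C))\le h^1(\mathcal O_C(C))\le$ (bounded by $\deg$ considerations, since $q=0$): from $0\to\mathcal O_X\to\mathcal O_X(C)\to\mathcal O_C(C)\to0$ we get $h^1(\mathcal O_X(C))\le h^1(\mathcal O_C(C))=h^0(\omega_C(-C))=h^0(\mathcal O_C(K_X))$, since $h^2(\mathcal O_X)=0$. This equals at most $K_X\cdot C+1$ by Clifford-type or degree bounds. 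So it suffices to get $h^0(\mathcal O_X(C))\gtrsim K_X\cdot C$. This is the method of \cite{Li21,HL26}, and I would apply it to each case.

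For part (2), when $b(X)>0$ there is a negative curve $E$, and then $\NE(X)$ is spanned by $E$ and a nef ray. Any other curve $C$ is nef. For nef $C$ away from finitely many classes, $h^0(\mathcal O_X(C))\ge\chi=1+\tfrac12 C(C-K)$. Since $C$ is bounded away from the isotropic ray, $C^2$ dominates $K_X\cdot C$, giving $h^1\le h^0(\mathcal O_C(K_X))\le c\,C^2/2\le c'h^0$. For parts (3) and (4), with $b=0$ the boundary rays of $\NE(X)$ are the isotropic directions $Q_1\pm Q_2$. I would classify the curves with $C(C-K)<0$ and small $C^2$. A direct computation with $C=aQ_1+bQ_2$ (with $a>|b|$, $C^2=a^2-b^2$) shows that the only infinite family where $h^0(\mathcal O_C(K_X))$ is unbounded relative to $C^2$ is $a=b+1$, namely $C^2=2b+1$ and $K\cdot C=2b+3$, i.e.\ exactly the $n$-type classes $(n+1)Q_1+nQ_2$ in the author's normalization; the symmetric family $Q_1-Q_2$ side is excluded by the parity of $K$.

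The key step is then the following. For $n$-type curves, $-\chi$ is about $n$, while every other family is handled as in the even case. Hence BCP holds if and only if $h^0((n+1)Q_1+nQ_2)\ge c\,n$ along $I_X$. This gives (3) when $I_X$ is finite, and both directions of (4): necessity because $h^1\ge -\chi+h^0-1\sim n$ forces $h^0\gtrsim n$. The main obstacle is the precise lattice bookkeeping of which classes $aQ_1+bQ_2$ near the isotropic rays can carry irreducible curves with $\chi$ very negative. I would handle this using the genus formula and $C\cdot K_X>0$, reducing to $a-b\in\{1,2,\dots\}$ small and checking that $a-b\ge2$ already gives $C^2\ge 4b$, so that $\chi\ge0$ up to bounded error.
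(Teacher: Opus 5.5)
Your framework matches the paper's. Since $p_g=0$, $h^2(\mathcal O_X(C))=h^0(K_X-C)=0$ for every curve, so no exceptional classes are needed. The BCP therefore amounts to bounding $(K_X-C)\cdot C$ above by a multiple of $h^0(\mathcal O_X(C))$, and the $n$-type curves are the only obstruction. But the core of the proof, showing that $(K_X-C)\cdot C$ is bounded above on every other family of curves, is never correctly carried out, and the substitutes you offer fail.

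In the even case, $(K_X-C)\cdot C=2a+2b-2ab=2-2(a-1)(b-1)\le 2$ for all $a,b\ge 1$. So $Q_1+bQ_2$ has $\chi(\mathcal O_X(C))=0$, not $-\chi\sim b$, and Riemann--Roch gives $h^1\le h^0$ at once. The linear $h^0$-growth you set out to prove there is neither needed nor established. Your ``cleaner route'' $h^1(\mathcal O_X(C))\le h^0(\mathcal O_C(K_X))\le K_X\cdot C+1$ is too lossy: it demands $h^0(\mathcal O_X(C))\gtrsim K_X\cdot C$, which Riemann--Roch cannot supply whenever $\chi$ stays bounded.

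That is exactly what breaks your part (2). Curves are \emph{not} bounded away from the isotropic extremal ray $\bR_{\ge0}[Q_1-Q_2]$. Write $D=a_1C_1+a_2C_2$ with $C_1=Q_1-Q_2$ and $C_2$ the negative curve, and take $a_2$ at its smallest possible value with $a_1\to\infty$. Then $D^2$ and $K_X\cdot D$ both grow like $2a_1$ while $\chi(\mathcal O_X(D))$ stays bounded, so ``$C^2/2\le c'h^0$'' has no justification. The paper instead writes $K_X=bC_1+b'C_2$ and splits into three cases:
\begin{itemize}
\item $a_1\ge b,\ a_2\ge b'$: then $(K_X-D)\cdot D\le 0$;
\item $a_1\le b$: then $D^2$ is bounded;
\item $a_1>b,\ a_2<b'$: then $(K_X-D)\cdot D\le bb'(2a+1)$, using the lattice bound $2a_2\ge b'$.
\end{itemize}

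In (3)--(4) the lattice bookkeeping is wrong. In your normalization $K_X=3Q_1+Q_2$, the family $a=b+1$ has $C^2=2b+1$ and $K_X\cdot C=2b+3$, hence $(K_X-C)\cdot C=2$, so it is harmless. The bad family in that normalization is $(n+1)Q_1-nQ_2$, with $K_X\cdot C=4n+3$. Your criterion ``$h^0(\mathcal O_C(K_X))$ unbounded relative to $C^2$'' isolates nothing: even on the $n$-type family $K_X\cdot C/C^2\to 2$. Parity plays no role either; the other isotropic side is harmless because $K_X$ is not symmetric in the two isotropic directions.

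The clean derivation takes $C_1=Q_1-Q_2$, $C_2=Q_1+Q_2$, $K_X=2C_1+C_2$ and $D=a_1C_1+a_2C_2$. Nefness gives $a_1,a_2\in\frac12\bZ_{>0}$, and $(K_X-D)\cdot D=2(a_1+2a_2-2a_1a_2)$, which is $\le 2$ unless $a_1=\frac12$. That case forces $D=(n+1)Q_1+nQ_2$ with $(K_X-D)\cdot D=2n+2$, and then your necessity/sufficiency argument goes through.

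In every case you also leave the curves with $D^2=0$ unhandled. For them $-\chi(\mathcal O_X(D))=\frac12K_X\cdot D-1$, which is a priori unbounded. They are harmless not because $h^0$ grows (a fibre has $h^0=2$), but because only finitely many numerical classes of curves lie on an isotropic ray. Indeed, two such curves are disjoint, and since $q=0$ some multiples move in a base-point-free pencil; its fibres have irreducible support because $\rho=2$. The paper gets this from Li's result on curves with $C^2=0$, combined with the numerical criterion of Hua--Li.

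Finally, you assume without comment that even fake quadrics carry no negative curves. This needs the CCCK25 description $\NE(X)=\Nef(X)=\bR_{\ge0}[C_1]+\bR_{\ge0}[C_2]$ with $C_1^2=C_2^2=0$.
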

\begin{remark}
In \cite[Theorem 1.6]{Li26a}, Li characterized the BCP for smooth projective surfaces $X$ with $\kappa(X)=\rho(X)=2$ and rational polyhedral Mori cone.
In contrast, our Theorem \ref{Mainthm} does not assume that the closed Mori cone is rational polyhedral.
Catanese \cite{Catanese25} showed the existence of odd fake quadrics, each of which is isogenous to a product of unmixed type.
A related open question asks whether there exists an odd fake quadric $X$ with $b(X)>0$; see \cite[Section 8]{CCCK25} for further open problems concerning fake quadrics.
Note that $X$ satisfies uniform $h^0$-growth for curves of $n$-type provided that $\kappa(X,Q_1+Q_2)=1$.
This condition, however, is not necessary.
More generally, it remains unknown whether there exists an odd fake quadric $X$ with $b(X)=0$ such that $X$ satisfies uniform $h^0$-growth for curves of $n$-type but $\kappa(X,Q_1+Q_2)\le 0$.
\end{remark}
The paper is organized as follows.
In Section \ref{Pre},  we organize preliminary material into three subsections: preliminary results, fake quadrics, and the proof of Proposition \ref{MDS-rho2-prop}.
We prove Theorem \ref{Mainthm} in Section \ref{Main-Sect}.
\subsection*{Acknowledgments}
The author would like to thank Antonio Laface for helpful conversations, Yifan Chen for answering questions, and the referees for their valuable comments and suggestions.
 \section{Preliminaries}
\label{Pre}
{\bf Notation and Terminology.}
In this paper, $X$ is a smooth projective surface over $\bC$.
\begin{itemize}
\item By a curve on $X$, we mean a reduced and irreducible curve.
\item A negative curve on $X$ is a curve with negative self-intersection.
 \item A prime divisor $C$ on $X$ is either a nef curve or a negative curve (in the latter case, $h^0(\mathcal O_X(C))=1$).
\item We say $X$ has $b(X)>0$ if $X$ contains at least one negative curve.
\item We say $X$ has $b(X)=0$ if $X$ contains  no negative curves.
 \item For every $\bR$-divisor $C$ with $C^2\ne0$ on $X$, we define a value $l_C$ associated to $C$ as follows:
\begin{equation*}
                                                       l_C:=\frac{(K_X\cdot C)}{\max\bigg\{ 1, C^2\bigg\}}.
\end{equation*}
\item For every $\bR$-divisor $C$ with $C^2=0$ on $X$, we define a value $l_C$ associated to $C$ as follows:
\begin{equation*}
                                   l_C:=\frac{(K_X\cdot C)}{\max\bigg\{1,h^0(\mathcal O_X(C))\bigg\}}.
\end{equation*}
\item We say that the closed Mori cone \(\NE(X)\) is rational polyhedral if there exist finitely many curves $C_1,\cdots, C_s$ on $X$ such that $\NE(X)=\sum \bR_{\ge0}[C_i]$.
\end{itemize}
\subsection{Preliminary results}
\begin{proposition}
\label{RR-prop}
Let $X$ be a smooth projective surface.
Take a curve $C$ on $X$.
Then 
\begin{equation}
\label{RR-eq}
2h^1(\mathcal O_X(C))\le 2h^0(\mathcal O_X(C))+(K_X-C)\cdot C+2q(X)-2.
\end{equation}
\end{proposition}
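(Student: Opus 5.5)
We will derive \eqref{RR-eq} from Riemann--Roch on $X$ together with an upper bound for $h^2(\mathcal O_X(C))$ coming from the structure sequence of $C$. Riemann--Roch gives
\[
h^0(\mathcal O_X(C))-h^1(\mathcal O_X(C))+h^2(\mathcal O_X(C))=\chi(\mathcal O_X)+\tfrac12 C\cdot(C-K_X).
\]
Multiplying by $2$ and using $\chi(\mathcal O_X)=1-q(X)+p_g(X)$, we obtain
\[
2h^1(\mathcal O_X(C))=2h^0(\mathcal O_X(C))+2h^2(\mathcal O_X(C))+(K_X-C)\cdot C-2+2q(X)-2p_g(X).
\]
Hence \eqref{RR-eq} is equivalent to the inequality $h^2(\mathcal O_X(C))\le p_g(X)$.

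To prove this, we use Serre duality, which gives $h^2(\mathcal O_X(C))=h^0(\mathcal O_X(K_X-C))$. Since $C$ is an effective (nonzero) divisor, we have the exact sequence
\[
0\to\mathcal O_X(K_X-C)\to\mathcal O_X(K_X)\to\mathcal O_C(K_X)\to0,
\]
where the first map is multiplication by a defining section of $C$. Taking global sections shows that $H^0(\mathcal O_X(K_X-C))$ injects into $H^0(\mathcal O_X(K_X))$. Therefore $h^2(\mathcal O_X(C))\le h^0(K_X)=p_g(X)$, and substituting this into the identity above yields \eqref{RR-eq}.

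No step is a real obstacle. The only point requiring care is the bookkeeping of signs when rearranging Riemann--Roch into the stated form with the factor $2$. Irreducibility and reducedness of $C$ are not even needed; effectivity suffices.
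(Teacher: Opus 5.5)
Your proof is correct and is essentially the paper's argument. The paper likewise combines Riemann--Roch with the bound $h^2(\mathcal O_X(C))-\chi(\mathcal O_X)\le q(X)-1$, which is the same as $h^2(\mathcal O_X(C))=h^0(\mathcal O_X(K_X-C))\le p_g(X)$ obtained from Serre duality and the effectivity of $C$; you additionally spell out the injection $H^0(\mathcal O_X(K_X-C))\hookrightarrow H^0(\mathcal O_X(K_X))$ and the sign bookkeeping that the paper leaves implicit.
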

\begin{proof}
By Serre duality, we have $h^2(\mathcal O_X(C))-\chi(\mathcal O_X)\le q(X)-1$.
This result and Riemann-Roch theorem imply that (\ref{RR-eq}) holds.
\end{proof}
\begin{proposition}
\label{Prop-Numerical}
\cite[Proposition 1.5]{HL26}
Let $X$ be a smooth projective surface.
If $X$ satisfies the BNC and there exists a positive constant $m(X)$ such that $l_C\le m(X)$ for every curve $C$ on $X$ and $D^2\le m(X)h^0(\mathcal O_X(D))$ for every curve $D$ with $D^2>0$ on $X$, then $X$ satisfies the BCP.
\end{proposition}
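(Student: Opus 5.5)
The plan is a direct case analysis on the sign of $C^2$, feeding each case into the Riemann--Roch type inequality (\ref{RR-eq}) of Proposition \ref{RR-prop}. Fix a curve $C$ on $X$; since $C$ is effective, $h^0(\mathcal O_X(C))\ge 1$. This lets us absorb any additive constant into a multiple of $h^0$. By (\ref{RR-eq}),
\begin{equation*}
2h^1(\mathcal O_X(C))\le 2h^0(\mathcal O_X(C))+K_X\cdot C-C^2+2q(X)-2 .
\end{equation*}
It therefore suffices to bound $K_X\cdot C-C^2$ from above by a constant multiple of $h^0(\mathcal O_X(C))$, with a constant independent of $C$. Note that $K_X\cdot C$ may be negative, but only upper bounds are needed.

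\emph{Case $C^2<0$.} Here $C$ is a negative curve, so $h^0(\mathcal O_X(C))=1$. By definition $l_C=K_X\cdot C$ because $\max\{1,C^2\}=1$, so the hypothesis gives $K_X\cdot C\le m(X)$. By the BNC, $-C^2\le b(X)$. Hence
\begin{equation*}
2h^1\le 2+m(X)+b(X)+2q(X)-2,
\end{equation*}
and this is at most $(m(X)+b(X)+2q(X))\,h^0$.

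\emph{Case $C^2=0$.} Now $l_C=K_X\cdot C/h^0(\mathcal O_X(C))$, since $h^0\ge1$. So $K_X\cdot C\le m(X)h^0(\mathcal O_X(C))$ and $-C^2=0$. This gives
\begin{equation*}
2h^1\le (2+m(X)+2q(X))\,h^0 .
\end{equation*}

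\emph{Case $C^2>0$.} Now $l_C=K_X\cdot C/C^2$ (if $C^2=1$ both normalizations agree). So $K_X\cdot C\le m(X)C^2$. Combined with the second hypothesis $C^2\le m(X)h^0(\mathcal O_X(C))$, this gives $K_X\cdot C\le m(X)^2h^0$, while $-C^2<0$ can be dropped. Thus
\begin{equation*}
2h^1\le (2+m(X)^2+2q(X))\,h^0 .
\end{equation*}

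Taking $c_X$ to be half the maximum of the three constants yields the BCP. There is no serious obstacle. The only points needing care are that the constant term $2q(X)-2$ is absorbed using $h^0\ge 1$, and that the case $C^2>0$ genuinely needs both hypotheses: $l_C\le m(X)$ alone only controls $K_X\cdot C$ relative to $C^2$, not relative to $h^0$.
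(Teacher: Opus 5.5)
The paper does not prove this statement; it quotes it from \cite[Proposition 1.5]{HL26}, so there is no in-paper argument to compare against. Your proof is correct and complete.

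Splitting by the sign of $C^2$, you bound $(K_X-C)\cdot C$ by a constant multiple of $h^0(\mathcal O_X(C))$ in each case. You then absorb the constant $2q(X)-2$ using $h^0(\mathcal O_X(C))\ge 1$. This is the same mechanism the paper relies on whenever it applies Proposition \ref{RR-prop} in Section \ref{Main-Sect}.

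A side remark: the BNC hypothesis is actually redundant. For a negative curve $C$ you have $l_C=K_X\cdot C\le m(X)$. Since $C$ is reduced and irreducible, $p_a(C)\ge 0$, so $C^2=2p_a(C)-2-K_X\cdot C\ge -2-m(X)$. Your case $C^2<0$ therefore goes through with $b(X)$ replaced by $m(X)+2$.
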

\begin{proposition}
\cite[Proposition 2.7]{Li23}
\label{C^2=0}
Let $C$ be a curve on a smooth projective surface $X$.
Suppose $C^2=0$.
Then there exists a constant $a_C$ such that $l_D\le a_C$ for every $n>0$ and  every curve $D\in |nC|$.
\end{proposition}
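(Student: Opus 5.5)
The plan is to reduce the statement to a linear lower bound for $h^0(\mathcal O_X(nC))$ in $n$, coming from a fibration attached to $C$.

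\textbf{Setup.} Since $D\sim nC$, we have $D^2=n^2C^2=0$. By definition,
$$l_D=\frac{n(K_X\cdot C)}{\max\{1,h^0(\mathcal O_X(nC))\}}.$$
If $K_X\cdot C\le 0$, then $l_D\le 0$ and any $a_C\ge 0$ works. So assume $K_X\cdot C>0$. It then suffices to find constants $\alpha>0$ and $\beta$, depending only on $C$, such that
$$h^0(\mathcal O_X(nC))\ge \alpha n-\beta$$
whenever $|nC|$ contains a curve $D$ and $n\ge 2$. The finitely many $n$ with $\alpha n-\beta<1$, together with $n=1$, contribute at most $n(K_X\cdot C)$ each, hence a bounded amount.

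\textbf{Step 1: a pencil.} Let $n\ge 2$ and let $D\in|nC|$ be a curve. Since $D$ is reduced and irreducible while $nC$ is not reduced, $D$ and $nC$ are distinct members of $|nC|$, so $h^0(\mathcal O_X(nC))\ge 2$. Every member of $|nC|$ has self-intersection $0$, and $D$ is irreducible. Hence the pencil spanned by $D$ and $nC$ has no fixed part. Two members of a pencil meet only at base points, so since they intersect in $D\cdot nC=0$ points, the pencil is base-point free. It defines a morphism to $\mathbb P^1$. Taking the Stein factorization gives a fibration $f\colon X\to B$ onto a smooth curve, whose fibres are numerically proportional to $C$, and $C$ is contained in a fibre.

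\textbf{Step 2: the fibration depends only on $C$.} The curves contracted by $f$ are exactly the curves $\Gamma$ with $\Gamma\cdot C=0$: this holds because $C$ is a positive multiple of a fibre class and the fibre class is nef and has positive degree on any curve that is not contracted. A fibration with connected fibres is determined by the set of curves it contracts. So $f$, the genus $g=g(B)$, and the multiplicity $m_0$ of the fibre $F_0$ through $C$ are independent of $n$. Since $C^2=0$ and $C$ is an irreducible component of $F_0$, Zariski's lemma gives $F_0=m_0C$, and so $\mathcal O_X(kF_0)\cong\mathcal O_X(km_0C)$ for every $k\ge 0$.

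\textbf{Step 3: the linear bound.} Set $k=\lfloor n/m_0\rfloor$. Since $nC-km_0C$ is effective, we get
$$h^0(\mathcal O_X(nC))\ \ge\ h^0(\mathcal O_X(kF_0))\ \ge\ h^0\bigl(\mathcal O_B(k\,f(F_0))\bigr)\ \ge\ k+1-g.$$
The middle inequality comes from pulling back sections via $f^*$, and the last from Riemann--Roch on $B$. This yields the bound with $\alpha=1/m_0$ and $\beta=g$, and therefore
$$l_D\le \frac{n\,(K_X\cdot C)}{n/m_0-g}$$
for large $n$, which is bounded uniformly in $n$. Combining this with the finitely many small values of $n$ produces the constant $a_C$.

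\textbf{Main obstacle.} The delicate point is Step 2: ensuring the fibration is independent of $n$, so that $m_0$ and $g$ are genuine constants attached to $C$. The first argument to try is the one given there, that the contracted curves are exactly those orthogonal to $C$ and that this set determines the Stein-factorized fibration. A secondary check is the passage from numerical proportionality to the linear equivalence $F_0=m_0C$; this is handled by Zariski's lemma applied to the single fibre containing $C$.
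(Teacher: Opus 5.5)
Your proof is correct. The paper gives no argument for this statement; it is quoted from \cite[Proposition 2.7]{Li23}, so there is nothing in the text to compare against step by step. Your route is a natural one, and every step checks out:
\begin{itemize}
\item a second member of $|nC|$ gives a base-point-free pencil (note $D\neq C$ because $(n-1)C\not\sim 0$, so $D\cap C=\emptyset$);
\item Stein factorization then gives a fibration $f\colon X\to B$;
\item Zariski's lemma gives $F_0=m_0C$;
\item pulling back $\mathcal O_B(kp_0)$ gives $h^0(\mathcal O_X(nC))\ge\lfloor n/m_0\rfloor+1-g$.
\end{itemize}

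Two remarks on economy. First, your Step 2, which you flag as the main obstacle, is not needed. Fix a single $n_0\ge 2$ for which $|n_0C|$ contains a curve. The fibration it produces already gives the linear lower bound for \emph{all} $n$, so independence of the fibration from $n$ never enters.

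Second, the fibration, Stein factorization and Zariski's lemma can all be bypassed. Let $n_0$ be the least integer with $h^0(\mathcal O_X(n_0C))\ge 2$. If no such $n_0$ exists, then $|nC|=\{nC\}$ for all $n\ge 2$, only $n=1$ occurs, and you may take $n_0=1$ below. If $s,t$ are independent sections of $\mathcal O_X(n_0C)$, then $s/t$ is a nonconstant rational function and hence transcendental over $\bC$. So the monomials $s^it^{k-i}$ are linearly independent in $H^0(\mathcal O_X(kn_0C))$. Since $nC-\lfloor n/n_0\rfloor n_0C\ge 0$, this gives $h^0(\mathcal O_X(nC))\ge\lfloor n/n_0\rfloor+1>n/n_0$ for all $n$. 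Therefore $a_C=n_0\max\{K_X\cdot C,0\}$ works.

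This shortcut uses $C^2=0$ only to make $l_D$ the relevant quantity. Your version gives more than the statement needs: it shows that $C$ is the reduced support of a fibre of a fibration.
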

\begin{proposition}
\label{MDS-prop}
Let $X$ be a smooth projective surface with $q(X)=0$ and rational polyhedral Mori cone.
Then $X$ is a Mori dream surface if and only if every extremal ray of $\Nef(X)$ is generated by an semiample divisor.
\end{proposition}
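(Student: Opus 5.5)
We have to prove Proposition \ref{MDS-prop}: for a smooth projective surface $X$ with $q(X)=0$ and rational polyhedral Mori cone, $X$ is a Mori dream surface if and only if every extremal ray of $\Nef(X)$ is generated by a semiample divisor. The plan is to reduce to the criterion quoted in the introduction: for $q(X)=0$, $X$ is a Mori dream surface if and only if the effective cone is rational polyhedral and every nef divisor is semiample \cite[Corollary~2.6]{AHL10}.

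\emph{Step 1: the cones.} Since $q(X)=0$, numerical and linear equivalence agree up to torsion, so $\Pic(X)_{\bQ}=\NS(X)_{\bQ}$. Suppose $\NE(X)=\sum\bR_{\ge0}[C_i]$ with curves $C_i$. Then $\NE(X)$ is spanned by effective classes, hence equals the effective cone, which is therefore rational polyhedral. Its dual $\Nef(X)$ is then also rational polyhedral, and each of its finitely many extremal rays is spanned by a rational, hence (after scaling) integral, class.

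\emph{Step 2: the ``only if'' direction.} If $X$ is a Mori dream surface, every nef divisor is semiample by \cite[Corollary~2.6]{AHL10}. In particular the integral generator of each extremal ray of $\Nef(X)$ is semiample.

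\emph{Step 3: the ``if'' direction.} Let $D_1,\dots,D_r$ be semiample integral generators of the extremal rays of $\Nef(X)$. Any integral nef divisor $D$ lies in the rational polyhedral cone $\sum\bR_{\ge0}D_j$, so by Carathéodory or Gordan's lemma some multiple satisfies $mD\equiv\sum a_jD_j$ with $a_j\in\bZ_{\ge0}$. A nonnegative combination of semiample divisors is semiample, since the relevant base loci are empty.

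The only subtle point, and the main obstacle, is passing from numerical equivalence back to linear equivalence. We have $mD-\sum a_jD_j\equiv0$, and with $q=0$ a numerically trivial divisor is torsion. Multiplying further by the torsion order gives $m'D\sim\sum a'_jD_j$, which is semiample, so $D$ is semiample. Combining this with Step 1 and \cite[Corollary~2.6]{AHL10}, $X$ is a Mori dream surface.
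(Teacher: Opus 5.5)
Your argument is correct, and it takes a different route from the paper. The paper gives no argument here: it only cites \cite[Propositions 2.1 and 2.2]{LLL26}. You instead derive the statement from the criterion the paper quotes in its introduction, \cite[Corollary~2.6]{AHL10}: when $q(X)=0$, $X$ is a Mori dream surface if and only if the effective cone is rational polyhedral and every nef divisor is semiample. You add two elementary observations. First, a Mori cone spanned by finitely many curves coincides with the effective cone, so both it and its dual $\Nef(X)$ are rational polyhedral. Second, semiampleness propagates from the extremal generators of $\Nef(X)$ to every nef class. The only non-formal step there is passing from $mD\equiv\sum a_jD_j$ to an honest linear equivalence. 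You handle it correctly: numerically trivial classes are torsion when $q(X)=0$.

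Your route buys transparency. It is self-contained apart from \cite{AHL10}, and it shows exactly where $q(X)=0$ is used: finite generation of $\Pic(X)=\NS(X)$, and torsion of numerically trivial divisors. The one-line citation hides both. The citation's only advantage is brevity.

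One point you leave implicit is that $\Nef(X)$ contains no lines, so it really is the sum of its extremal rays. This is standard: $\Nef(X)\subseteq\NE(X)$ on a surface, and an ample class is strictly positive on $\NE(X)\setminus\{0\}$.
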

\begin{proof}
It follows from \cite[Propositions 2.1 and 2.2]{LLL26}.
\end{proof}
\subsection{Fake quadrics}
In this subsection, we first recall the definitions and basic results of fake quadrics from \cite[Section 2]{CCCK25}.
\begin{definition}
Let $X$ be a smooth projective surface over $\bC$.
The surface $X$ is called \emph{$\bQ$-homology quadric} if $q(X)=p_g(X)=0, b_2(X)=2$.
It is called an \emph{even homology quadric} if 
\begin{enumerate}
\item[(1)] $q(X)=p_g(X)=0, b_2(X)=0$.
\item[(2)] the intersection form on $\NS(X)$ is even, that is, it is $\begin{pmatrix}0 & 1 \\ 1 & 0\end{pmatrix}$.
\end{enumerate}
We  call it an \emph{odd homology quadric} if instead the intersection form is odd, and hence diagonalizable with diagonal entries $(+1,-1)$.
\end{definition}
As showed in \cite[Section 2]{CCCK25}, for  every $\bQ$-homology quadric $X$ is either a Hirzebruch surface or a surface of general type, and 
$$
\chi(\mathcal O_X)=1,\quad  c_2(X)=4, \quad K_X^2=8.
$$
Note that Hirzebruch surfaces satisfy the BCP by \cite[Theorem 1.10(2)]{HL26}.
\begin{definition}
Let $X$ be a smooth projective surface.
The surface $X$ will be called an \emph{even fake homology quadric} if it is an even homology quadric and is of general type.
\end{definition}
In general, a minimal smooth complex projective surface of general type with $p_g=q=0$, $K^2=8$ has Picard number $2$ and $K_S$ is ample by \cite[Proposition 2.1.1]{Miyaoka84}. 
\begin{definition}
Let $X$ be a smooth projective surface. 
If $X$ is an odd homology quadric which is of general type, then either $X$ is not minimal and it is a one point blow up of a fake projective plane, or we call the surface $X$ an \emph{odd fake homology quadric}, which means:
\begin{enumerate}
    \item $S$ is minimal of general type with $K_X^2=8$, $p_g(X)=0$;
    \item the intersection form on $\NS(X)$ is odd.
\end{enumerate}
\end{definition}
Therefore smooth minimal surfaces of general type with $K^2=8$ and $p_g=0$ are divided into two classes: the even and the odd fake homology quadrics.
\begin{definition}
Let $X$ be a smooth projective surface. 
The surface $X$ is said to be \emph{isogenous to a higher product} \cite{Catanese00} if
$
X \cong (C_1 \times C_2)/G,
$
where $C_i$ is a smooth curve with $g(C_i)\ge 2$ and $G$ is a finite group acting faithfully and freely on $C_1\times C_2$.
If there are respective actions of $G$ on $C_1$ and $C_2$ such that $G$ acts by the diagonal action $g(x,y)=(gx,gy)$ on $C_1\times C_2$, then $X$ is called of \emph{unmixed type}.
If some element of $G$ exchanges the two factors, then $X$ is called of \emph{mixed type}.
\end{definition}
\begin{proposition}
Let $X$ be a surface isogenous to a higher product with $p_g=0$.
Then $X$ is a Mori dream surface.
\end{proposition}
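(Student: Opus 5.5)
The plan is to deduce this from known results about isogenous surfaces; the paper's own remark already points to \cite[Proposition 3.4]{KL19} and \cite[Theorem 3.5]{FL22}. Write $X\cong (C_1\times C_2)/G$ with $g(C_i)\ge 2$ and $G$ acting freely. Since $p_g(X)=0$, we have $q(X)=0$ and $\chi(\mathcal O_X)=1$. The curves $C_1$ and $C_2$ are exchanged in the mixed case, and their quotients give the two natural fibrations in the unmixed case. So I will split into the unmixed and mixed types.

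In the unmixed case, $G$ acts diagonally and there are two fibrations $X\to C_1/G$ and $X\to C_2/G$. Since $q(X)=0$, both bases are $\bP^1$. Let $F_1,F_2$ be the classes of the fibers; they satisfy $F_i^2=0$ and $F_1\cdot F_2>0$.

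\begin{enumerate}
\item I first show that $\NS(X)_{\bQ}$ is spanned by $F_1,F_2$. The invariant part of $H^2(C_1\times C_2,\bQ)$ under $G$ contains $H^1(C_1)\otimes H^1(C_2)$ invariants together with the two fiber classes. Its $(1,1)$-part is $\rho(X)$. Because $p_g=0$, the invariant $H^{2,0}$ vanishes. Combined with $e(X)=4$, $K_X^2=8$ and $b_1=0$, this gives $b_2(X)=2$, hence $\rho(X)=2$.
\item Next, $F_1$ and $F_2$ are nef with $F_i^2=0$, so they span the two boundary rays of the positive cone. Since $\rho=2$, this yields $\NE(X)=\Nef(X)=\bR_{\ge0}[F_1]+\bR_{\ge0}[F_2]$, which is rational polyhedral.
\item Each extremal ray is generated by a fiber class of a morphism to $\bP^1$, so it is base point free and in particular semiample. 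Proposition \ref{MDS-prop} then gives that $X$ is a Mori dream surface. Equivalently, we are in case (1) of Proposition \ref{MDS-rho2-prop}, with $\kappa(X,F_i)=1$.
\end{enumerate}

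In the mixed case, $C_1\cong C_2=C$ and the index-two subgroup $G^0\subset G$ acts diagonally. The étale double cover $Y=(C\times C)/G^0\to X$ is unmixed. Since $Y$ may have $p_g(Y)>0$, the argument above does not directly apply to $Y$. Instead I will work on $X$ directly.

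\begin{enumerate}
\item The invariant classes on $X$ include the class $F$, the image of $F_1+F_2$, with $F^2>0$, together with $K_X$. The relevant cohomology is the $G$-invariant part of $H^2(C\times C)$, and with $p_g=0$ this again forces $\rho(X)=2$ by the same Euler characteristic count.
\item Both boundary rays of the positive cone must now be found. The isotropic classes of $\NS(X)_{\bR}$ pull back to classes on $Y$ that are $G/G^0$-invariant and isotropic. On $Y$, the invariant Néron--Severi group is generated over $\bQ$ by $F_1+F_2$ and possibly a class coming from graph-type correspondences. Finding the isotropic rays requires the explicit structure, and I expect this to be the main obstacle.
\item I would first try to show $\NE(Y)^{\iota}$, the part invariant under the involution $\iota$ swapping factors, is spanned by images of curves. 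Then I would push forward to $X$. The aim is to land in case (1) or (3) of Proposition \ref{MDS-rho2-prop}, where semiampleness follows because nef classes pulled back to $Y$ become combinations of fibers.
\end{enumerate}

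If the direct argument stalls in the mixed case, I will simply cite \cite[Theorem 3.5]{FL22}. That result treats the mixed case via the finite generation of the Cox ring, which is preserved under the finite quotient by $G/G^0$.
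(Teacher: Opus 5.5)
Your proof is correct, and for the mixed case it is the paper's proof. The paper's proof consists only of the two citations \cite[Proposition 3.4]{KL19} and \cite[Theorem 3.5]{FL22}: Keum--Lee for the unmixed case, Frapporti--Lee for the mixed case. The difference is that you prove the unmixed case directly, and that argument is complete:
\begin{itemize}
\item $K_X^2=8\chi(\mathcal O_X)=8$ gives $e(X)=4$, hence $b_2(X)=2$, and $p_g=0$ gives $\rho(X)=2$.
\item The fibre classes $F_1,F_2$ are nef and isotropic with $F_1\cdot F_2=|G|>0$. So they span both isotropic rays, and $\NE(X)=\Nef(X)=\bR_{\ge0}[F_1]+\bR_{\ge0}[F_2]$.
\item Each ray is base point free, so Proposition \ref{MDS-prop} applies.
\end{itemize}
This buys transparency: it shows explicitly that unmixed surfaces fall into case (1) of Proposition \ref{MDS-rho2-prop}. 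The paper's citation is shorter but gives no picture of the cones.

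You were right to fall back on \cite{FL22} in the mixed case, because your direct sketch cannot work as written.
\begin{itemize}
\item On $Y$ we have $\iota^*F_1=F_2$, so the only $\iota$-invariant combination of fibre classes is $F_1+F_2$. This class is numerically a positive multiple of $K_Y$, so it descends to a multiple of $K_X$ and is ample on $X$. (This is also why listing $K_X$ among the invariant classes adds nothing new.)
\item Since $\rho(X)=2$, neither extremal ray of $\Nef(X)$ is proportional to this ample class. So their pullbacks must involve the extra (graph-type) invariant class and are \emph{not} combinations of fibres. Semiampleness of those rays needs genuinely new input, which is what \cite{FL22} supplies.
\end{itemize}
Also drop the gloss that \cite{FL22} works by descending finite generation of the Cox ring along $Y\to X$. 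That would require knowing that $Y$ is a Mori dream surface. But $\chi(\mathcal O_Y)=2$ gives $p_g(Y)=1+q(Y)\ge 1$, so neither your unmixed argument nor \cite[Proposition 3.4]{KL19} applies to $Y$. Citing the theorem itself is all that is needed.
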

\begin{proof}
It follows from \cite[Proposition 3.4]{KL19} and \cite[Theorem 3.5]{FL22}.
\end{proof}
\begin{proposition}
Let $X$ be a fake quadric, and let $f: X\to\bP^1$ be a fibration, whose general fibre we denote by $F$.
Then $X$ satisfies the BCP whenever $b(X)>0$.
\end{proposition}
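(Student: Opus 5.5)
We reduce the claim to Proposition \ref{Prop-Numerical}, using that $X$ has Picard number two. Since $X$ carries the fibration $f$, the fibre class $F$ satisfies $F^2=0$, so $[F]$ spans one boundary ray of $\NE(X)$. Since $b(X)>0$, there is a negative curve $E$, and $[E]$ must span the other boundary ray, because on a surface with $\rho=2$ any negative curve is extremal. Hence
$$
\NE(X)=\bR_{\ge0}[F]+\bR_{\ge0}[E],
$$
and this cone is rational polyhedral. In particular $E$ is the unique negative curve, and the BNC holds with $b=-E^2$.

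Next we bound $l_C$ uniformly. Let $C$ be a curve on $X$.
\begin{itemize}
\item If $C=E$, then $l_C$ is a fixed number.
\item If $C^2=0$, then $[C]$ is proportional to $[F]$. The curve $C$ is then a component of a fibre of $f$, and since $\rho(X)=2$ with $E$ not contained in fibres, all fibres are irreducible. So $C$ is a fibre, or a reduced multiple-fibre support with $mC\sim F$ numerically. Proposition \ref{C^2=0}, applied to $F$ and to the finitely many multiple-fibre supports, bounds $l_C$.
\item If $C^2>0$, write $C\equiv aF+bE$ with $a,b\ge0$. Because $K_X$ is ample and $\NS(X)\otimes\bR$ is two-dimensional, both $K_X\cdot C$ and $C^2$ are controlled by $(a,b)$. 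The condition $C\cdot E\ge0$ forces $a(F\cdot E)\ge b(-E^2)$, so $C$ lies in a closed subcone of the interior of the nef cone. On that subcone, $C^2$ is bounded below by a positive multiple of $(a+b)^2$, while $K_X\cdot C$ is at most a multiple of $(a+b)$. Since $C^2$ is a positive integer, this gives $l_C\le m$ for a uniform constant $m$.
\end{itemize}

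The remaining condition is $C^2\le m\,h^0(\mathcal O_X(C))$ whenever $C^2>0$, and we expect this to be the main obstacle. For such $C$ we have $C^2\le (C\cdot F)\cdot$const, since $C$ is numerically bounded by multiples of $F$ in the subcone. The plan is to show that $h^0(\mathcal O_X(C))$ grows linearly in $a$ and $b$ by exhibiting sections. Use the nef divisor $D_0=(-E^2)F+(F\cdot E)E$, or its Mori-dream semiample version, which spans the other nef ray. The first thing to try is to write $C$ as a nonnegative combination of $F$ and $D_0$, up to a bounded correction $R$ from a finite set of representatives of $\NS(X)$ modulo the lattice these generate. Then $h^0(C)\ge h^0(\alpha F+\beta D_0)-$const. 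The term $h^0(\alpha F)=\alpha+1$ comes from $|F|$ being a pencil. For the $D_0$-part, use that $X$ is a Mori dream surface in this situation by Proposition \ref{MDS-prop}, provided $D_0$ is semiample. Then $h^0(\beta D_0)$ grows like $\beta^2 D_0^2/2$ by Riemann--Roch, since $h^2=0$ for large $\beta$.

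If semiampleness of $D_0$ cannot be established directly, the fallback is Riemann--Roch applied to $C$ itself:
$$
h^0(C)\ge \chi(\mathcal O_X(C))=1+\tfrac12(C^2-K_X\cdot C),
$$
using $h^2(C)=h^0(K_X-C)=0$ once $K_X-C$ is not effective. This yields $C^2\le 2h^0(C)+K_X\cdot C\le (2+m')h^0(C)$, by the $l_C$ bound and $h^0(C)\ge1$. Only finitely many numerical classes can have $K_X-C$ effective, and those are handled by enlarging the constant. With all hypotheses verified, Proposition \ref{Prop-Numerical} gives the BCP.
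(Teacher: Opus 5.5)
Your description of $\NE(X)=\bR_{\ge0}[F]+\bR_{\ge0}[E]$ and the uniqueness of the negative curve are correct. They match the paper's first step; the paper then finishes in one line by quoting \cite[Lemma 3.7]{Li23}, using $\kappa(X,F)=1$ and $q(X)=0$.

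Your $l_C$ step contains a false claim. Curves are only constrained by $C\cdot E\ge0$, so their classes can approach the ray $\bR_{\ge0}[F]$ (fix $b$, let $a\to\infty$). There $C^2$ grows only linearly, not like $(a+b)^2$. The bound still survives: $C^2\ge ab(F\cdot E)\ge a$, because $b(F\cdot E)=C\cdot F\ge1$, while $K_X\cdot C=O(a)$.

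The genuine gap is the third hypothesis of Proposition \ref{Prop-Numerical}, namely $C^2\le m\,h^0(\mathcal O_X(C))$.
\begin{enumerate}
\item Your main route needs $h^0(C)\ge h^0(\alpha F+\beta D_0)-\mathrm{const}$. This is unjustified: the correction $R$ need not be effective, and twisting down by a fixed divisor can lose an unbounded number of sections. Moreover, semiampleness of $D_0$ is exactly the open Mori dream question in case (2) of Proposition \ref{MDS-rho2-prop}.
\item Your fallback does not close the gap either. Riemann--Roch gives $C^2\le 2h^0(C)-2+K_X\cdot C$. The $l_C$ bound gives $K_X\cdot C\le m'C^2$, not $K_X\cdot C\le m'h^0(C)$, so your last inequality does not follow.
\end{enumerate}

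Concretely, $b(X)>0$ forces $X$ to be odd, with $F\equiv m(Q_1-Q_2)$. Take a curve $C=xQ_1+yQ_2$ with $C\cdot(Q_1-Q_2)=x+y=1$. Then $C^2=x-y$ is unbounded, $K_X\cdot C=C^2+2$, and $\chi(\mathcal O_X(C))=0$. So Riemann--Roch gives no growth of $h^0$ at all. In fact, if $h^0(\mathcal O_F(C|_F))=1$ on a general fibre, then $f_*\mathcal O_X(C)$ is a line bundle, and irreducibility of the horizontal curve $C$ forces $h^0(\mathcal O_X(C))=1$. Thus Proposition \ref{Prop-Numerical} is the wrong tool: its hypothesis would fail for such curves with large $C^2$. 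The BCP still holds for them, since $(K_X-C)\cdot C=2$.

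The fix is to drop Proposition \ref{Prop-Numerical}, bound $(K_X-C)\cdot C$ uniformly, and apply Proposition \ref{RR-prop}. Set $s=C\cdot(Q_1-Q_2)$ and $t=C\cdot(Q_1+Q_2)$; then $(K_X-C)\cdot C=2s+t-st$. For $E=aQ_1+(a+1)Q_2$, the condition $C\cdot E\ge0$ gives $(2a+1)t\ge s$. Hence $(K_X-C)\cdot C\le 2s-s(s-1)/(2a+1)$, which is bounded for $s\ge1$. The case $s=0$ leaves only finitely many fibre supports. This is in substance the argument of Lemma \ref{lem-odd-b>0}.
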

\begin{proof}
By \cite[Proposition 6.2 (2)]{CCCK25}, $X$ has exactly one  negative curve $C$.
Then $\NE(X)=\bR_{\ge0}[F]+\bR_{\ge0}[C]$.
Note that $\kappa(X,F)=1$ and $q(X)=0$.
Thus, $X$ satisfies the BCP by \cite[Lemma 3.7]{Li23}.
\end{proof}
\subsection{Proof of Proposition \ref{MDS-rho2-prop}}
In this subsection, let $X$ be a smooth projective surface  with $q(X)=0$ such that $\NE(X)=\bR_{\ge0}[C_1]+\bR_{\ge0}[C_2]$, and $C_1$ and $C_2$ are curves on $X$.
\begin{proof}[Proof of Proposition \ref{MDS-rho2-prop}]
Note that $C_1^2\le0$ and $C_2^2\le0$ by \cite[Proposition 3.1]{Li21}.
Then $X$ fails into the following three cases: (1) $C_1^2=C_2^2=0$, (2) $C_1^2=0$ and $C_2^2<0$, and (3) $C_1^2<0$ and $C_2^2<0$.
By Proposition \ref{MDS-prop}, $X$ is a Mori dream surface if and only if each extremal ray of $\Nef(X)$  is generated by an semiample divisor.
Since $q(X)=0$, a nef isotropic divisor  $D$ has $\kappa(X,D)=1$  if and only if  it is semiample by \cite[Lemma 5.1.2.2]{ADHL15}.
By \cite[Proposition 1.4.28]{Lazarsfeld04}, $\NE(X)$ is dual to $\Nef(X)$.
Then we have the following result.
\begin{enumerate}
	\item If $C_1^2=C_2^2=0$, then $$\Nef(X)=\bR_{\ge0}[C_1]+\bR_{\ge0}[C_2].$$
	\item If $C_1^2=0$ and $C_2^2<0$, then $$\Nef(X)=\bR_{\ge0}[C_1]+\bR_{\ge0}[(-C_2^2)C_1+(C_1\cdot C_2)C_2].$$
	\item If $C_1^2<0$ and $C_2^2<0$, then 
	$$\Nef(X)=\bR_{\ge0}[(C_1\cdot C_2)C_1+(-C_1^2)C_2]+\bR_{\ge0}[(-C_2^2)C_1+(C_1\cdot C_2)C_2].$$
\end{enumerate}
This completes the proof of Proposition \ref{MDS-rho2-prop} by using Proposition \ref{MDS-prop}.
\end{proof}
\section{Proof of Theorem \ref{Mainthm}}
\label{Main-Sect}
\subsection{Even fake quadrics}
\begin{lemma}
\label{even-lem}
Every even fake quadric satisfies the BCP.
\end{lemma}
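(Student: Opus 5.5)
We will verify the hypotheses of Proposition \ref{Prop-Numerical}. Since $X$ is an even fake quadric, $\NS(X)=\bZ[Q_1]\oplus\bZ[Q_2]$ with $Q_1^2=Q_2^2=0$ and $Q_1\cdot Q_2=1$. The form is even and $K_X^2=8$, so $K_X\equiv 2Q_1+2Q_2$ (we only need $K_X\cdot(aQ_1+bQ_2)=2(a+b)$). A curve class $C\equiv aQ_1+bQ_2$ has $C^2=2ab$.

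\textbf{Step 1: no negative curves, so the BNC holds.} Let $C\equiv aQ_1+bQ_2$ be a curve with $C^2<0$. Then $ab<0$. Since $K_X$ is ample, $K_X\cdot C=2(a+b)>0$. Adjunction gives $C^2+K_X\cdot C=2p_a(C)-2\ge -2$, i.e.\ $2ab+2a+2b\ge -2$, so $(a+1)(b+1)\ge 0$. Say $a>0>b$. Then $b+1\le 0$ and $a+1>0$, so $(a+1)(b+1)\ge0$ forces $b=-1$, whence $C^2=-2a$ and $K_X\cdot C=2(a-1)$. Now $C^2+K_X\cdot C=-2$, so $C$ is a smooth rational curve. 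Rational curves on a fake quadric would need separate exclusion: $p_g=q=0$, $K^2=8=3c_2-4$ is not on the Miyaoka--Yau line, so Miyaoka's bound on the number of rational curves (or the logarithmic version via \cite{Miyaoka84}) at least bounds such curves. The simplest route is to cite \cite[Section 2]{CCCK25}, where even fake quadrics are shown to contain no negative curves, equivalently $\NE(X)=\Nef(X)=\bR_{\ge0}[Q_1]+\bR_{\ge0}[Q_2]$. Either way only finitely many negative curves occur, so the BNC holds. Without negative curves, every curve has $a,b\ge 0$.

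\textbf{Step 2: bound $l_C$.} If $C^2>0$, then $a,b\ge1$ and $l_C=2(a+b)/(2ab)\le 2$. If $C^2=0$, then $C\equiv aQ_1$ (or $bQ_2$), with $K_X\cdot C=2a$. If some multiple of $Q_1$ is effective with isotropic class, then either $\kappa(X,Q_1)=1$, giving a fibration whose fibres have classes that are multiples of a primitive fibre class $F$, and Proposition \ref{C^2=0} applied to a fixed fibre bounds $l_D$ for all $D\in|nF|$; or $h^0=1$ for every multiple, and then $l_C=2a$, which is problematic only if there are infinitely many isotropic curves with unbounded $a$. The latter cannot happen: two distinct irreducible curves $C,C'$ with $C\equiv aQ_1$ and $C'\equiv a'Q_1$ satisfy $C\cdot C'=0$, hence are disjoint. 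A curve $C$ with $C^2=0$ and $h^0(nC)=1$ for all $n$ is rigid, so there are only finitely many rigid ones, and the classes of fibres in a pencil are bounded. This treats the two isotropic rays and gives a uniform constant.

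\textbf{Step 3: bound $D^2/h^0$ for $D^2>0$.} This is the main obstacle. For $D\equiv aQ_1+bQ_2$ with $a,b\ge1$, Riemann--Roch with $\chi(\mathcal O_X)=1$ gives $h^0(D)\ge \chi(D)=1+\frac12(D^2-K_X\cdot D)=1+ab-a-b$, because $h^2(D)=h^0(K_X-D)$ vanishes once $D\cdot K_X> K_X^2$ by ampleness, and for small $D$ is bounded. This gives $h^0(D)\ge (a-1)(b-1)$, which controls $D^2=2ab$ when $a,b\ge2$. Curves with $\min(a,b)=1$, i.e.\ $D\equiv aQ_1+Q_2$ and $D^2=2a$, are the delicate case: Riemann--Roch only yields $h^0\ge 0$. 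To handle them we use the evenness: $D\cdot Q_1=1$. If $|Q_1|$ is a pencil, then $D$ is a section of the fibration and $h^0(D)$ grows linearly in $a$ by adding fibres. Otherwise we exploit Proposition \ref{RR-prop} directly: $h^1\le h^0+\frac12(K_X-D)\cdot D-1=h^0+(a+1)-a-1=h^0$, so the BCP inequality holds for these curves with constant $1$ even without bounding $D^2/h^0$. We therefore combine Proposition \ref{Prop-Numerical} for curves with $a,b\ge2$ with this direct estimate for the thin strips $a=1$ or $b=1$, which yields the BCP.
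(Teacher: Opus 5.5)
Your proof is correct and is essentially the paper's argument. Both take $\NE(X)=\Nef(X)=\bR_{\ge0}[Q_1]+\bR_{\ge0}[Q_2]$ with $K_X\equiv 2(Q_1+Q_2)$ from \cite{CCCK25}, handle isotropic curves via Proposition~\ref{C^2=0}, and finish curves with $D^2>0$ by Proposition~\ref{RR-prop}. (The finiteness of rigid isotropic curves that you assert holds because two such curves on one ray are disjoint with proportional classes, so, as $q(X)=0$, some multiples are linearly equivalent and move in a pencil.) The differences are detours you can drop: for $a,b\ge2$ the paper just notes $(K_X-D)\cdot D=2(a+b-ab)\le0$, giving $h^1\le h^0$ with no lower bound on $h^0$ or appeal to Proposition~\ref{Prop-Numerical}; and your pencil sub-case for $\min(a,b)=1$ is vacuous (a curve with $D\cdot F=1$ would be a section, hence $\cong\bP^1$, contradicting $p_a(D)=2a+2$) and superfluous, since the estimate $h^1\le h^0$ there holds unconditionally.
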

\begin{proof}
Since $\rho(X)=2$, $X$ has at most two negative curves.
To show the BCP for $X$, it suffices to show that there exists a constant $c_X>0$ such that $h^1(\mathcal O_X(D))\le c_Xh^0(\mathcal O_X(D))$ for every curve $D$ with $D^2\ge0$ on $X$.
By \cite[Lemma 4.1 and Corollary 4.3]{CCCK25}, there exist two nef divisors $C_1$ and $C_2$ such that $$C_1^2=C_2^2=0,\quad  C_1\cdot C_2=1,\quad  K_X\sim 2(C_1+C_2), \quad
\Nef(X)=\NE(X)=\bR_{\ge0}[C_1]+\bR_{\ge0}[C_2].
$$
Since $D, C_1$ and $C_2$ are nef divisors, $D\cdot C_1, D\cdot C_2\in\bZ_{\ge0}$.
Thus, $D=a_1C_1+a_2C_2$ with  $a_1,a_2\in\bZ_{\ge0}$.
Without loss of generality, we may assume that $a_1\ge a_2\ge0$.
If $a_2=0$, then $D=a_1C_1$, so $\kappa(X,C_1)\ge0$ since $D$ is a curve.
By Proposition \ref{C^2=0}, there exists a constant $m_1>0$ such that $l_D\le m_1h^0(\mathcal O_X(D))$.
Then by Proposition \ref{Prop-Numerical},  we may assume that $a_2\ge1$.
As a result, we have
\begin{equation}
\begin{split}
\label{eq-I}
(K_X-D)\cdot D&=(2-a_1)a_2+(2-a_2)a_1
\\&=2(a_1+a_2-a_1a_2).
\end{split}
\end{equation}
If $a_2\ge2$, then $(K_X-D)\cdot D\le 0$.
If $a_2=1$, then $(K_X-D)\cdot D=2$.
Thus, Proposition \ref{RR-prop} implies that  $h^1(\mathcal O_X(D)) \le h^0(\mathcal O_X(D))$, where we use $q(X)=0$.
\end{proof}
\subsection{Odd fake quadrics}
Let $X$ be an odd fake quadric.
By \cite[Lemma 6.1]{CCCK25}, there $Q_1,Q_2\in\Pic(X)$ such that $$Q_1^2=1, \quad Q_2^2=-1, \quad Q_1\cdot Q_2=0, \quad K_X=3Q_1-Q_2.$$
\begin{lemma}
\label{lem-odd-b>0}
Every odd fake quadric $X$ satisfies the BCP provided that $b(X)>0$.
\end{lemma}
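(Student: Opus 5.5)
Since $b(X)>0$, there is a negative curve $C$ on $X$. Because $\rho(X)=2$, $X$ has at most two negative curves. The plan is to describe $\NE(X)$ explicitly using the existence of $C$, reduce to curves $D$ with $D^2\ge 0$, and then bound $h^1$ by $h^0$ via Proposition \ref{RR-prop}, as in Lemma \ref{even-lem}.

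\emph{Step 1: the class of $C$.} Write $C\sim aQ_1+bQ_2$, so $C^2=a^2-b^2<0$. Since $K_X$ is ample, $K_X\cdot C=3a+b>0$. Adjunction gives $C^2+K_X\cdot C=2p_a(C)-2\ge -2$. Since $K_X^2=8$ and the intersection form is odd, $C^2\equiv K_X\cdot C \pmod 2$. A Hodge-index / Bogomolov--Miyaoka--Yau type bound (as in \cite[Section 6]{CCCK25}) should pin $C$ down to very few classes.

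\emph{Step 2: the cone and a reduction.} The ray $[C]$ is extremal in $\NE(X)$. If there are two negative curves, $\NE(X)$ is rational polyhedral. In either case, the finitely many negative curves satisfy the BNC trivially and contribute bounded $l_C$. By Proposition \ref{Prop-Numerical}, together with Proposition \ref{C^2=0} for isotropic classes, it then suffices to bound $h^1(\mathcal O_X(D))\le c\,h^0(\mathcal O_X(D))$ for curves $D$ with $D^2\ge0$. Such a $D$ is nef, so $D\cdot C\ge0$ and $D$ lies in $\Nef(X)=\NE(X)^\vee$.

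\emph{Step 3: the Riemann--Roch estimate.} Write $D\sim xQ_1+yQ_2$. Then
$$(K_X-D)\cdot D=3x+y-x^2+y^2.$$
Proposition \ref{RR-prop} with $q=0$ gives
$$2h^1\le 2h^0+(K_X-D)\cdot D-2.$$
So it is enough to show that $(K_X-D)\cdot D$ is bounded above by a constant times $h^0(\mathcal O_X(D))$ on the nef cone minus the origin.

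The key point is that the extra condition $D\cdot C\ge0$ cuts $\Nef(X)$ strictly inside the positive cone $x\ge|y|$. This keeps nef classes away from the isotropic direction $x=-y$, along which $3x+y-x^2+y^2=2x$ is unbounded. On the resulting smaller cone, $x^2-y^2\ge \varepsilon x^2$ for some $\varepsilon>0$, so $(K_X-D)\cdot D$ is negative except for finitely many classes. The exceptions can be absorbed into the constant, using $h^0\ge1$ for curves.

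\emph{Main obstacle.} The hardest step is Step 1/Step 2: showing that the nef cone really is strictly separated from the ray $Q_1-Q_2$ (the direction where $K_X\cdot$ is small) and not merely from $Q_1+Q_2$. Near $Q_1+Q_2$ we have $(K_X-D)\cdot D\approx 4x-(x^2-y^2)$, which is harmless only when $D^2$ is large relative to $x$.

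I would first try to show that $C\cdot(Q_1-Q_2)<0$, using $K_X\cdot C>0$ and $C^2<0$. This would force the nef boundary on that side to have positive self-intersection. If instead $C$ lies on the other side, I would fall back on the fibration argument: a negative curve on a fake quadric forces the other extremal ray to be a fibration class $F$ with $\kappa(X,F)=1$, as in \cite[Proposition 6.2]{CCCK25}. The conclusion would then follow from \cite[Lemma 3.7]{Li23}, exactly as in the proposition on fibrations in the preliminaries.
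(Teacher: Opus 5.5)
\textbf{There is a genuine gap: the key claim in your Step 3 is false, and you have the two isotropic directions reversed.}

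The condition $D\cdot C\ge0$ does not push $\Nef(X)$ strictly inside the positive cone. Since $X$ has exactly one negative curve and $\rho(X)=2$, the other boundary ray of $\NE(X)$ is isotropic. Dually, $\Nef(X)$ always has an isotropic boundary ray, so no bound $x^2-y^2\ge\varepsilon x^2$ holds on it.

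Your Step 1 never pins down $C$, and that is exactly what decides which side survives. By \cite[Proposition 6.2]{CCCK25} (as used in the paper), $C=aQ_1+(a+1)Q_2$ with $a\ge0$. So $C\cdot(Q_1-Q_2)=2a+1>0$ and $C\cdot(Q_1+Q_2)=-1$. Hence $\NE(X)$ is spanned by $Q_1-Q_2$ and $C$, and $\Nef(X)$ by $Q_1-Q_2$ and $(a+1)Q_1+aQ_2$. The nef cone contains precisely the ray you wanted to exclude.

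This breaks both of your routes. Your first attempt ($C\cdot(Q_1-Q_2)<0$) is false. Your fallback needs $Q_1-Q_2$ to be the class of a fibration with $\kappa=1$, and nothing in the paper or the cited proposition provides that. The paper uses the fibration argument only under the extra hypothesis that a fibration $X\to\bP^1$ exists.

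\textbf{The missing idea.} Put $u=x+y=D\cdot(Q_1-Q_2)$ and $v=x-y=D\cdot(Q_1+Q_2)$. Then
$$(K_X-D)\cdot D=2u+v-uv,\qquad D\cdot C=\tfrac12\bigl((2a+1)v-u\bigr).$$
Near $Q_1-Q_2$ the linear growth of $K_X\cdot D$ is cancelled by $D^2$:
\begin{itemize}
\item For $u=1$ the value is exactly $2$.
\item For $u\ge2$, nefness gives $v\ge u/(2a+1)$. So $(K_X-D)\cdot D\le 2u-u(u-1)/(2a+1)$, which is bounded above by a constant depending only on $a$.
\item For $u=0$ the classes lie on the isotropic ray and are handled via Proposition \ref{C^2=0}.
\end{itemize}
Combined with Proposition \ref{RR-prop}, this gives the BCP. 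It is what the paper's case analysis in the basis $C_1=Q_1-Q_2$, $C_2=C$ accomplishes.

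The genuinely dangerous direction is $Q_1+Q_2$ with $v=1$. These are the classes $(n+1)Q_1+nQ_2$, with $(K_X-D)\cdot D=2n+2$; they are the $n$-type classes of case (4) of the theorem. It is the negative curve, via $C\cdot(Q_1+Q_2)<0$, that removes them from the nef cone. Had your first attempt succeeded, the nef cone would abut $Q_1+Q_2$ instead. Your constant bound would then fail on exactly this family.
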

\begin{proof}
Since $b(X)>0$, $X$ has exactly one negative curve $C=aQ_1+(a+1)Q_2$ with $a\in\bZ_{\ge0}$ by \cite[Proposition 6.2]{CCCK25}.
To show the BCP for $X$, it suffices to show that there exists a constant $c_X>0$ such that either $D^2\le c_Xh^0(\mathcal O_X(D))$ or $h^1(\mathcal O_X(D))\le c_Xh^0(\mathcal O_X(D))$ for every curve $D$ with $D^2\ge0$ on $X$.
Note that another extremal ray of $\NE(X)$ is generated by  $Q_1-Q_2$ or $Q_1+Q_2$.
Note that $(Q_1-Q_2)\cdot C=2a+1$ and $(Q_1+Q_2)\cdot C=-1$.
Then $(Q_1-Q_2)$ generates another extremal ray of $\NE(X)$ since $\rho(X)=2$.
Indeed, choose a curve $D=a_1(Q_1+Q_2)+a_2C$ with $a_1, a_2>0$.
Hence, $D^2=-2a_1a_2-a_2^2<0$, which is absurd.
Thus, let $C_1=Q_1-Q_2, C_2=C$, and 
 $$\NE(X)=\bR_{\ge0}[C_1]+\bR_{\ge0}[C_2].$$
Note that $$C_1^2=0, C_2^2=-1, C_1\cdot C_2=2a+1, K_X=(2+\frac{1}{2a+1})C_1+\frac{2}{2a+1}C_2,D=a_1C_1+a_2C_2$$ with $a_1,a_2\ge0$.
Set $$b=(2+\frac{1}{2a+1}), \quad b'=\frac{2}{2a+1}.$$
If $D^2=0$, then $D=a_1C_1$, and $\kappa(X,C_1)=1$ since $D$ is a curve.
By Proposition \ref{C^2=0}, there exists a constant $m_1>0$, independent of  $D$, such that $l_D\le m_1h^0(\mathcal O_X(D))$.
Then by Proposition \ref{Prop-Numerical}, we may assume that $a_1>0$ and $a_2>0$.
Since $D, C_1$ and $C_2$ are nef divisors, then $D\cdot C_1=(2a+1)a_2\in\bZ_{>0}$ and $D\cdot C_2=(2a+1)a_1-a_2\in\bZ_{>0}$.
As a result, we have
\begin{equation}
\label{a_2<a_1}
a_1\in\bZ_{>0}, \quad a_2\in\frac{1}{2a+1}\bZ_{>0}, \quad  a_2<a_1(2a+1).	
\end{equation}
If $a_1\ge b$ and $a_2\ge b'$, then 
$$
(K_X-D)\cdot D=(b-a_1)(D\cdot C_1)+(b'-a_2)(D\cdot C_2)\le0.
$$
Thus, Proposition \ref{RR-prop} implies that $h^1(\mathcal O_X(D))  \le h^0(\mathcal O_X(D))$.
If $a_1\le b$, then $a_2\le b(2a+1)$ by (\ref{a_2<a_1}).
As a result, 
\begin{equation*}
\begin{split}
	D^2 &\le 2a_1a_2(C_1\cdot C_2)
	\\&\le 2b^2(2a+1)^2.
\end{split}	
\end{equation*}
Now we may assume that $a_1>b$ and $a_2<b'$.
Note that $2a_2\ge b'$.
Then 
\begin{equation}
\begin{split}
\label{eq-II}
(K_X-D)\cdot D&=((b-a_1)C_1+(b'-a_2)C_2)\cdot (a_1C_1+a_2C_2)
\\&=(ba_2+a_1(b'-2a_2))(C_1\cdot C_2)+a_2(a_2-b')(-C_2^2)
\\&\le bb'(2a+1).
\end{split}
\end{equation}
Therefore, Proposition \ref{RR-prop} implies that
\begin{equation*}
\begin{split}
2h^1(\mathcal O_X(D)) & \le (2+bb'(2a+1))h^0(\mathcal O_X(D)).
\end{split}
\end{equation*}
In all, $X$ satisfies the BCP.
\end{proof}
\begin{lemma}
\label{lem-reduced}
Let $X$ be an odd fake quadric with $b(X)=0$.
Then $X$ satisfies the BCP provided that $I_X$ is finite.
\end{lemma}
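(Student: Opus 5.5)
The plan is to bypass the cone-by-cone analysis of Lemma \ref{lem-odd-b>0} and bound $(K_X-D)\cdot D$ directly in terms of the coordinates of $D$ in $\NS(X)=\bZ[Q_1]\oplus\bZ[Q_2]$, then feed this bound into Proposition \ref{RR-prop}. Since $b(X)=0$, every curve $D$ satisfies $D^2\ge0$.

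\textbf{Step 1: coordinates and the key identity.} Write $D\equiv xQ_1+yQ_2$ with $x,y\in\bZ$. Set $u=x-y$ and $v=x+y$; these are integers of the same parity. Then $D^2=x^2-y^2=uv\ge0$. Since $K_X=3Q_1-Q_2$ is ample, $K_X\cdot D=3x+y>0$. Together with $uv\ge0$ this forces $u,v\ge0$ (if both were negative then $x<0$). In these coordinates $K_X\cdot D=u+2v$, so
$$(K_X-D)\cdot D=u+2v-uv=2-(u-2)(v-1).$$
By Proposition \ref{RR-prop} with $q(X)=0$,
$$2h^1(\mathcal O_X(D))\le 2h^0(\mathcal O_X(D))+u+2v-uv-2.$$

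\textbf{Step 2: case split on $(u,v)$.}
\begin{enumerate}
\item If $u\ge2$ and $v\ge1$, then $(K_X-D)\cdot D\le2$, so $h^1\le h^0$.
\item If $v=1$, then $(K_X-D)\cdot D=2$ exactly, so again $h^1\le h^0$.
\item If $u=0$ or $v=0$, then $D^2=0$. Here I would invoke Proposition \ref{C^2=0} together with Proposition \ref{Prop-Numerical}, exactly as in Lemmas \ref{even-lem} and \ref{lem-odd-b>0}. The BNC is trivially satisfied because $b(X)=0$, and $D^2>0$ curves are already covered by the cases above.
\item The remaining case is $u=1$, $v=2n+1\ge1$, i.e.\ $D\equiv (n+1)Q_1+nQ_2$. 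For $n\ge1$ this is exactly an $n$-type curve; $n=0$ gives the single class $Q_1$. Here $(K_X-D)\cdot D=1+v=2n+2$, hence $h^1(\mathcal O_X(D))\le h^0(\mathcal O_X(D))+n$.
\end{enumerate}

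\textbf{Step 3: using finiteness of $I_X$.} Because $I_X$ is finite, only finitely many values of $n$ occur in the last case. For each such curve, $h^0(\mathcal O_X(D))\ge1$, so $h^1\le(1+N)h^0$ with $N=\max(I_X\cup\{0\})$. Combining all cases gives a uniform constant $c_X$.

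\textbf{Main obstacle.} I expect the delicate step to be the isotropic case $D^2=0$. Proposition \ref{C^2=0} controls $l_D$ only for curves in $|nC|$ with $C$ a fixed curve, while $\Pic(X)$ may contain torsion, so curves in classes $m(Q_1\pm Q_2)$ need not all be linear multiples of one curve. My first attempt would argue that two distinct curves with proportional isotropic classes are disjoint. A suitable multiple then moves in a base-point-free pencil, giving a fibration, and in that case $\kappa=1$ and all such curves are fibre components, so $l_D$ is bounded. Otherwise there is essentially one such curve up to finitely many torsion twists, and the bound on $l_D$ follows from Proposition \ref{C^2=0} applied to finitely many fixed curves.
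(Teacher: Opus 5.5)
Your proof is correct and is essentially the paper's argument: your $(u,v)$ are $(2a_1,2a_2)$ in the paper's isotropic basis $C_1=Q_1-Q_2$, $C_2=Q_1+Q_2$. The identity $(K_X-D)\cdot D=u+2v-uv$, the case split isolating $D\equiv(n+1)Q_1+nQ_2$, and the closing use of the finiteness of $I_X$ all match the paper. You are more careful in two places: your uniform bound $h^1(\mathcal O_X(D))\le h^0(\mathcal O_X(D))+n$ avoids the paper's maximum over fixed representatives, and your dichotomy (two disjoint curves on a ray give a base-point-free pencil; otherwise the ray contains at most one curve) actually justifies the isotropic case, where the paper simply asserts $\kappa(X,C_1)=1$ and applies Proposition \ref{C^2=0} to multiples of a single curve.
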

\begin{proof}
Since $b(X)=0$, to show the BCP for $X$, it suffices to show that there exists a constant $c_X>0$ such that $h^1(\mathcal O_X(D))\le c_Xh^0(\mathcal O_X(D))$ for every curve $D$ with $D^2\ge0$ on $X$.
Since $b(X)=0$, then $$\NE(X)=\bR_{\ge0}[Q_1-Q_2]+\bR_{\ge0}[Q_1+Q_2]$$ by \cite[Proof of Theorem 4.5]{CCCK25}.
Set $C_1:=Q_1-Q_2$ and $C_2:=Q_1+Q_2$.
Then $$C_1^2=C_2^2=0, \quad C_1\cdot C_2=2, \quad K_X=2C_1+C_2, \quad D=a_1C_1+a_2C_2$$ with $a_1,a_2\ge0$.
If $D^2=0$, then $D=a_1C_1$ or $a_2C_2$.
Without loss of generality, we may assume that $D=aC_1$, and $\kappa(X,C_1)=1$ since $D$ is a curve.
By Proposition \ref{C^2=0}, there exists a constant $m_1>0$, independent of  $D$,  such that $l_D\le m_1h^0(\mathcal O_X(D))$.
Then we may assume that $a_1>0$ and $a_2>0$ by Proposition \ref{Prop-Numerical}.
Since $D, C_1$ and $C_2$ are nef divisors, then $D\cdot C_1=2a_2\in\bZ_{>0}$ and $D\cdot C_2=2a_1\in\bZ_{>0}$.
As a result, we have
\begin{equation}
\begin{split}
\label{eq-II}
(K_X-D)\cdot D&=(2-a_1)2a_2+(1-a_2)2a_1
\\&=2(a_1+2a_2-2a_1a_2).
\end{split}
\end{equation}
Note that $a_1\in\frac{1}{2}\bZ_{>0}$ and $a_2\in\frac12\bZ_{>0}$.

If $a_1\ge2$ and $a_2\ge1$, then $(K_X-D)\cdot D\le0$.

If $a_1\ge2$ and $\frac 12\le a_2<1$, then $(K_X-D)\cdot D=2((1-2a_2)a_1+a_2)\le1$.

If $1\le a_1<2$, then  $(K_X-D)\cdot D=2(2(1-a_1)a_2+a_1)\le 2$.

If $a_1=a_2=\frac 12$, then $(K_X-D)\cdot D=2$.
Thus, Proposition \ref{RR-prop} implies that $$h^1(\mathcal O_X(D)\le h^0(\mathcal O_X(D))$$ if either $a_1\ge1$ or $a_1=a_2=\frac 12$.

If  $a_1=\frac 12$ and $a_2\ge1$, then $(K_X-D)\cdot D=2a_2+1$.
As a result, $D=(\frac{1}{2}+a_2)Q_1+(a_2-\frac{1}{2})Q_2$.
Since $\NS(X)=\bZ[Q_1]+\bZ[Q_2]$, $a_2-\frac12=n\in\bZ_{>0}$.
Then $D=(n+1)Q_1+nQ_2$ is a curve of $n$-type.
Since $I_X$ is finite, we may take
$$m(X):=\max_{n\in I_X}\bigg\{\frac{1+ h^1(\mathcal O_X((n+1)Q_1+nQ_2)))}{h^0(\mathcal O_X((n+1)Q_1+nQ_2)))}\bigg\}.$$
Then $h^1(\mathcal O_X(D))\le m(X)h^0(\mathcal O_X(D))$ for every curve $D$ of $n$-type on $X$.
\end{proof}
\begin{lemma}
\label{lem-odd-b=0}
If $X$ is of odd type with $b(X)=0$ and $I_X$ is infinite, then $X$ satisfies the BCP if and only if $X$ satisfies uniform $h^0$-growth for curves of $n$-type.
\end{lemma}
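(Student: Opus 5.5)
The plan is to reuse the case analysis from the proof of Lemma \ref{lem-reduced}. Only the $n$-type curves are not already covered there, so the equivalence reduces to comparing $h^1$ and $h^0$ on the classes $(n+1)Q_1+nQ_2$.

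First, as in that proof, set $C_1=Q_1-Q_2$ and $C_2=Q_1+Q_2$, so that $\NE(X)=\bR_{\ge0}[C_1]+\bR_{\ge0}[C_2]$. The case analysis there gives the following, with no finiteness assumption on $I_X$:
\begin{enumerate}
\item Every curve $D$ with $D^2=0$ is handled by Proposition \ref{C^2=0} and Proposition \ref{Prop-Numerical}.
\item Every curve $D=a_1C_1+a_2C_2$ with $a_1\ge1$, or with $a_1=a_2=\frac12$, satisfies $h^1(\mathcal O_X(D))\le h^0(\mathcal O_X(D))$.
\item The only remaining curves are those with $a_1=\frac12$, $a_2=n+\frac12$, that is, the curves of $n$-type $D\sim(n+1)Q_1+nQ_2$.
\end{enumerate}
For these we compute with $K_X=3Q_1-Q_2$. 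We get $D^2=(n+1)^2-n^2=2n+1$ and $K_X\cdot D=3(n+1)+n=4n+3$, hence $(K_X-D)\cdot D=2n+2$. Since $\chi(\mathcal O_X)=1$, Riemann--Roch gives $\chi(\mathcal O_X(D))=1+\frac12(D^2-K_X\cdot D)=-n$. For $n\ge1$ we have $h^2(\mathcal O_X(D))=h^0(K_X-D)$, and $K_X-D\sim(2-n)Q_1-(n+1)Q_2$ satisfies $(K_X-D)\cdot C_2=1-2n<0$. As $C_2$ is nef, $K_X-D$ is not effective, so $h^2=0$. Therefore
\[
h^1(\mathcal O_X(D))=h^0(\mathcal O_X(D))+n
\]
exactly, for every $n\in I_X$.

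For sufficiency, assume uniform $h^0$-growth. Then there are $\epsilon>0$ and $N$ such that $h^0(\mathcal O_X((n+1)Q_1+nQ_2))\ge\epsilon n$ for all $n\in I_X$ with $n\ge N$. For these $n$ the identity gives $h^1\le(1+1/\epsilon)h^0$. There are only finitely many $n\in I_X$ with $n<N$, and for them we take a maximum of the ratio as in Lemma \ref{lem-reduced}. Combining this with items (1) and (2) yields the BCP.

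For necessity, suppose the BCP holds with constant $c_X$. Apply it to the fixed $n$-type curve $D_n$ for each $n\in I_X$. This gives $h^0+n\le c_Xh^0$, hence $c_X>1$ and $h^0(\mathcal O_X(D_n))\ge n/(c_X-1)$. So the $\liminf$ is at least $1/(c_X-1)>0$, which is uniform $h^0$-growth.

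The main point to check carefully is the vanishing of $h^2$, which makes the identity $h^1=h^0+n$ exact rather than an inequality. This exactness is what gives the converse direction. It relies on the nefness of $C_2=Q_1+Q_2$, which holds because $b(X)=0$ forces the Mori cone to equal the nef cone, spanned by the two isotropic classes. Everything else is inherited from Lemma \ref{lem-reduced}.
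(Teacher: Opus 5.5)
Your proof is correct and follows the paper's route: you reduce, via the case analysis of Lemma \ref{lem-reduced}, to curves of $n$-type and then apply Riemann--Roch with $(K_X-D)\cdot D=2n+2$, which gives $h^1(\mathcal O_X(D))\le h^0(\mathcal O_X(D))+n$ for sufficiency and $h^1(\mathcal O_X(D))\ge h^0(\mathcal O_X(D))+n$ for necessity (your identity $h^1=h^0+n$ packages both). One slip: $(K_X-D)\cdot C_2=3$, and the negative intersection is $(K_X-D)\cdot C_1=1-2n$, so you should test against the nef class $C_1=Q_1-Q_2$; in fact no nefness is needed, since $D$ is effective and so $h^2(\mathcal O_X(D))=h^0(\mathcal O_X(K_X-D))\le h^0(\mathcal O_X(K_X))=p_g(X)=0$.
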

\begin{proof}
$\Rightarrow$: Since $X$ satisfies the BCP, there exists a constant $c_X>0$ such that  
\begin{equation}
\label{BCP-eq}
h^1(\mathcal O_X(D))\le c_Xh^0(\mathcal O_X(D)),\quad c_X>1
\end{equation}
 for every curve $D=(n+1)Q_1+nQ_2$ on $X$.
Note that 
\begin{equation}
\label{KD-eqI}
(K_X-D)\cdot D=2n+2.
\end{equation}
 Riemann-Roch theorem implies that
 \begin{equation}
 \label{RR-eqII}
 2h^1(\mathcal O_X(D))=2h^0(\mathcal O_X(D))+(K_X-D)\cdot D+2h^2(\mathcal O_X(D))-2\chi(\mathcal O_X).
 \end{equation}
 Note that $2h^2(\mathcal O_X(D))-2\chi(\mathcal O_X)\le q(X)-1$.
Since $I_X$ is infinite,   (\ref{BCP-eq}), (\ref{KD-eqI}) and (\ref{RR-eqII}) imply that 
\begin{equation*}
\liminf_{n\to +\infty}\frac{h^0(\mathcal O_X((n+1)Q_1+nQ_2)))}{n}\ge\frac{1}{2(c_X-1)}\liminf_{n\to+\infty}\frac{(K_X-D)\cdot D}{n}>0. 
\end{equation*}

$\Leftarrow$:  By the proof of Lemma \ref{lem-reduced}, to show that BCP for $X$, it suffices to show that there exists a postive constant $m(X)>0$ such that $h^0(\mathcal O_X(D))\le c_Xh^0(\mathcal O_X(D))$ for every curve $D$ of $n$-type on $X$.
Suppose $X$ satisfies uniform $h^0$-growth for  curves of $n$-type.
Then  
\begin{equation}
\label{liminf-eq}
\liminf_{\substack{n \in I_X,~ n\to+\infty}}\frac{h^0\bigl(\mathcal O_X\big((n+1)Q_1+nQ_2\big)\bigr)}{n}>0.	
\end{equation}
Note that (\ref{liminf-eq}) implies that there exists a postive constant $N$ and constant $k>0$ such that for $n>N$, we have 
\begin{equation}
\label{kh-eqI}
	kh^0(\mathcal O_X((n+1)Q_1+nQ_2)))\ge n+1+\chi(\mathcal O_X)-h^2(\mathcal O_X(D)).
\end{equation}
Then Proposition \ref{RR-prop}, (\ref{KD-eqI}) and (\ref{kh-eqI})  imply that
$$
2h^1(\mathcal O_X(D))\le (k+2)h^0(\mathcal O_X(D)).
$$ 
If $n\le N$, then  Proposition \ref{RR-prop} and (\ref{KD-eqI}) imply that 
$$
2h^1(\mathcal O_X(D))\le (2+2N)h^0(\mathcal O_X(D)).
$$
In all, $X$ satisfies the BCP.
\end{proof}
\subsection{Proof of Theorem \ref{Mainthm}}
\begin{proof}[Proof of Theorem \ref{Mainthm}]
It follows from Lemmas \ref{even-lem}, \ref{lem-odd-b>0}, \ref{lem-reduced} and  \ref{lem-odd-b=0}.
\end{proof}

\end{document}